\setlist[itemize]{font = \upshape, before = \leavevmode}
\setlist[enumerate]{font = \upshape, before = \leavevmode}
\setlist[description]{before = \leavevmode}
    \ifodd\value{page}          
\newcommand{\cref}[1]{\zcref{#1}}
\newcommand{\Cref}[1]{\zcref[S]{#1}}
\newcommand{\zcrefglobalstringname}[2]{
  \__zrefclever_opt_varname_lang_type:nnnn{\languagename}{#1}{#2}{tl}
}
\newcommand{\zcreflocalstringname}[2]{
  \__zrefclever_opt_varname_type:een{#1}{#2}{tl}
}
\newcommand{\zcrefgetstring}[2]{
  \__zrefclever_provide_langfile:e { \languagename }
  \__zrefclever_process_language_settings:
  \ifcsvoid{\zcreflocalstringname{#1}{#2}}
    {\csname \zcrefglobalstringname{#1}{#2} \endcsname}
    {\csname \zcreflocalstringname{#1}{#2} \endcsname}
}
\NewDocumentCommand{\newzctheorem}{mO{#1}m}{
  \newtheorem{#1}[sharedtheoremcounter]{#3}
    \AddToHook{env/#1/begin}{%
      \zcsetup{countertype={sharedtheoremcounter=#1}}}
}
\theoremstyle{definition}
\theoremstyle{remark}
\DeclarePairedDelimiter{\p}{\lparen}{\rparen}          
\DeclarePairedDelimiter{\floor}{\lfloor}{\rfloor}      
\newcommand{\N}{\mathbb{N}}
\newcommand{\Z}{\mathbb{Z}}
\newcommand{\bfc}{\mathbf{c}}
\newcommand{\bfe}{\mathbf{e}}
\newcommand{\calC}{\mathcal{C}}
\newcommand{\id}{\mathrm{id}}
\newcommand{\defeq}{\vcentcolon=}
\begin{document}

\title{A symmetry approach to number tricks}
\date{}
\author{Håkon Kolderup}
\maketitle

\begin{abstract}
\noindent We generalize the classical ``1089-number trick", which states that a certain combination of addition, subtraction and swapping the digits of a three-digit number will always output 1089. More precisely, we show that any pair of zero divisors $f\circ g=0$ in the group ring $\Z[\Sigma_n]$ on the $n$-th symmetric group gives rise to a partition of the set of $n$-digit numbers into subsets $U_\bfe$ defined by linear inequalities, such that the zero divisors act constantly on each $U_\bfe$ and hence define a number trick.
\end{abstract}

\section{Introduction}
A well-known ``number trick" proceeds as follows. Take any three-digit number $abc$ with $a>c$ and subtract its reverse $cba$. Then, to this difference\footnote{The $1089$-trick outputs the result $1089$ for all three-digit numbers $abc$ with $a>c$, but we have to remember to treat also the difference as a three-digit number. For example, if $a=c+1$, then the difference $abc-cba$ is $099$ as a three-digit number. Thus, reversing again and adding we get $990+99=1089$.} add the reverse of the difference. The answer is always $1089$.

Taking for instance the number $593$, we first reverse it to get $395$. Upon subtracting the reverse we get $593-395=198$. Finally we add to this the reverse of our answer, giving $198+891=1089$.
Spelling this out in general with $abc=a\cdot 10^2+b\cdot10+c$ reveals that the coefficients $a$, $b$ and $c$ cancel out and we are left with what remains after carrying, which sums to 1089.\newline

The 1089-number trick has been featured in various media, books \cite{Acheson}, and research papers \cite{Yannis,Behrends,1089}. Almirantis and Li \cite{Yannis} iterated the steps of the 1089-trick and studied the resulting dynamical system, while Behrends \cite{Behrends} and Webster \cite{1089} considered a generalization of the 1089-trick to $n$-digit numbers by using the reverse of an $n$-digit number and then applying the recipe of the 1089-trick. The papers \cite{Behrends,1089} moreover relate the number of possible outputs of these generalized 1089-tricks to Fibonacci numbers.\newline

The aim of this note is to provide several new examples of such number tricks, and more generally to set up a formalism allowing the reader to easily discover their own tricks. A new example we will encounter is the \emph{rotation trick}, in which we start as in the $1089$-trick with a three-digit number $abc$, this time with $a\ge b>c$, and subtract the \emph{rotated number} $cab$. We write the difference $abc-cab$ as a three-digit number $def$ and finally add together all rotations of this number: $def+fde+efd$. The answer is always 1998.

What is the underlying mechanism behind the $1089$-trick, the generalized $1089$-tricks of \cite{Behrends,1089}, and the rotation trick? All of these deal in some way with a permutation of the digits of the input number, and after suitably subtracting and adding these permutations the digits end up canceling, yielding a constant end result. The reason why the digits cancel is that the underlying linear combination of permutations, considered as an element in the group ring $\Z[\Sigma_n]$, sums to zero. Based on this observation, we will in this note generalize the number tricks encountered above to a class of number tricks that stem from a null relation in $\Z[\Sigma_n]$. More precisely, we define an action of $\Z[\Sigma_n]$ on $n$-digit numbers, and prove that the action of any pair of zero divisors $f,g\in\Z[\Sigma_n]$ 
depends only on the carrying and not on the input numbers. We show furthermore that the carrying is locally constant, and hence that the relation $f\circ g=0$ defines a number trick.

\subsection*{Overview} \Cref{section:informal} is an informal discussion exemplifying the main points of this paper via the classical 1089-trick as well as new number tricks. \Cref{sect:mainthm} contains the main technical arguments and formalizes the discussion in \Cref{section:informal}. Finally, in \Cref{table} we revisit the examples of \Cref{section:informal} in more detail.

\subsection*{Notation} Below follows an overview of the notation we use:
\[\begin{array}{l|l}
\Sigma_n, \ \Z[\Sigma_n] & \text{Symmetric group on } n\text{ letters, integral group ring on }\Sigma_n\\
\N & \text{The set }\{0,1,2,\dots\}\text{ of natural numbers}\\
\N_n & \text{The set }\{a_{1}a_2\dots a_n\defeq \sum_{i=1}^{n}a_i10^{n-i}:0\le a_i\le 9\}\subseteq\N \text{ of }n\text{-digit numbers}\\
V_n & \text{The set }\{0,1,\dots,9\}^n\subseteq\Z^n \text{ of length-}n\text{ digit vectors} \\
g\cdot v,\ f\circ g & \text{Action of }g\in\Z[\Sigma_n]\text{ on }v\in \Z^n\text{, product (in }\Z[\Sigma_n]\text{) of }f,g\in\Z[\Sigma_n]\\
\Phi\colon \Z^n\to \Z & \text{The evaluation homomorphism }\Phi(x_1,\dots,x_n)=\sum_{i=1}^n x_i10^{\,n-i}\\
N\colon \Z^n\to V_n & \text{Normalization map, defined in \Cref{def:N}}
\end{array}
\]
We note that there is no real distinction between $\N_n$ and $V_n$: since we allow zero as leading coefficients for $n$-digit numbers, a vector $(a_1,\dots,a_n)\in V_n$ corresponds bijectively to the $n$-digit number $a_1 a_2\dots a_n\in\N_n$. Thus we may use the terms ``$n$-digit number" and ``length-$n$ digit vector" interchangeably.

\subsubsection*{Acknowledgments}
I am grateful to Karl Erik Holter for his interest which encouraged me to write this note. I would also like to thank the anonymous referee for many helpful comments and remarks.

\section{Zero divisors in the symmetric group ring give rise to number tricks}\label{section:informal}
We can think of the 1089-number trick as the computation of the action on a three-digit number $abc$ of the product of linear combinations of permutations
\[
(1+\tau)\circ (1-\tau)\in\Z[\Sigma_3],
\]
where $\tau=(13)\in\Sigma_3$ is the transposition $\tau(abc)=cba$. To see this, pick a three-digit number $abc$ with $a>c$ and write the difference $abc-cba$ as a three-digit number $def$. By computing the action of the operator $(1+\tau)\circ(1-\tau)$ on $abc$ we mean
\begin{align*}
(1+\tau)\circ (1-\tau)(abc)&=(1+\tau)(abc-cba)\\
&=(1+\tau)(def)\\
&=def+fed,
\end{align*}
which is precisely the $1089$-trick.

Now, as $\tau^2=1$, the element $(1+\tau)\circ (1-\tau)=1-\tau^2$ is zero in $\Z[\Sigma_3]$. This is the reason why the digits of $abc$ cancel.
Based on this observation, our aim is to provide a general setting for number tricks of this sort using any pair of zero divisors in $\Z[\Sigma_n]$.

\subsection{Formal and spectator friendly number tricks} This approach of using any pair of zero divisors from $\Z[\Sigma_n]$ is however quite general, and the associated number trick may not always be suitable to perform in front of a spectator. In fact, this happens already for the 1089-trick: if $a<c$, it is not immediately clear how to proceed since the initial subtraction $abc-cba$ takes us outside the set $\N_3$. There are several possible interpretations of the 1089-trick for $a<c$: for instance, one may allow negative numbers so that $-1089$ will be the result of the trick. Another possibility, relating to the approach of \cite{Yannis}, is to declare that the smaller number should always be subtracted from the larger number. In order to obtain a streamlined approach to general number tricks we will in this note take a third approach, which simultaneously encodes the carrying that occurs in the $1089$-trick, and also solves the problem of potentially landing outside $\N_n$. More precisely, we will in \Cref{def:N} construct a \emph{normalization map} $N\colon\Z^n\to V_n$, which takes any $n$-tuple of integers and produces an $n$-tuple of integers between $0$ and $9$. This normalization process is an algorithm generalizing the usual carry operation, working from right to left producing carries $c_i$ at each stage (see \Cref{def:N} for details).

Our interpretation of a number trick is then as follows. Start with a pair of zero divisors $f\circ g=0\in\Z[\Sigma_n]$; compute the action $g\cdot v$ of $g$ on  $v\in V_n$; normalize according to \Cref{def:N}; let $f$ act on this normalization and finally compute the result $\Phi(f\cdot N(g\cdot v))$. \Cref{thm:main} shows in particular that there is a subset $U\subseteq V_n$ such that $\Phi(f\cdot N(g\cdot v))$ is independent of $v\in U$. For $\tau=(13)$, $f=1+\tau$, $g=1-\tau$ and $v=(a,b,c)$, this interpretation coincides of course with the $1089$-trick (see \Cref{ex:neg}), with the result being constant on $U=\{(a,b,c)\in V_3 : a>c\}$. 

We will furthermore generalize the distinction between the two cases $a>c$ and $a<c$ in the $1089$-trick: as we saw above, if $a>c$ then after action by $g=1-\tau$ we still obtain a three-digit number $abc-cba$, while if $a<c$ we do not. We will therefore call a nonzero number trick \emph{spectator friendly} if $\Phi(g\cdot U)\subseteq \N_n$, while in general we call it simply  a \emph{formal number trick} (see \Cref{def:trick}).

\subsection{Results}We are now ready to state the purpose of this note. We will in \Cref{sect:mainthm} show the following generalization of the 1089-trick:
\begin{itemize}
\item For any pair of zero divisors $f\circ g=0$ in $\Z[\Sigma_n]$, the value of $\Phi(f\cdot N(g\cdot v))$ for $v\in V_n$ depends only on the carries produced in the normalization process. In other words, the digits of the input numbers cancel out.
\item There is a partition (depending only on $g$) of $V_n$ into cells defined by linear inequalities, such that the normalization is constant on each cell of the partition. This, along with the previous point, constitute our main result, \Cref{thm:main}.
\item In addition to establishing the formal properties above, our aim is to provide several examples. We give a list of various classes of examples in \Cref{ex:null-rels} below, and in \Cref{table} we revisit some of those examples as well as others.
\end{itemize}

\begin{example}\label{ex:1089-cells}
According to the claims above, the 1089-number trick should give rise to a partition of $\N_3$ into cells on which the carrying produced in the normalization process is constant. We see this as follows: the carrying is constant on the subset $\{abc:a>c\}\subseteq\N_3$, on which the output of the computation is $1089$.

The carrying is also constant on the diagonal $\{abc:a=c\}$, but here the result of the computation is $0$. 

Finally, the carrying is constant on the remaining locus $\{abc:a<c\}$, on which our normalization process outputs $1010$ as we will see in \Cref{ex:neg}.

Hence the 1089-number trick gives rise to the partition of $\N_3$ into a disjoint union of cells,
$
\N_3=\{a>c\}\sqcup\{a=c\}\sqcup\{a<c\},
$
on which the output of the trick is respectively $1089$, $0$, and $1010$. In the terminology above, only the cell $\{a>c\}$ gives a spectator friendly number trick.
\end{example}

We summarize the above discussion by the following definition, which will be justified by \Cref{thm:main}:

\begin{definition}\label{def:trick}
A \emph{formal number trick} is an ordered triple $(f,g,U)$ consisting of zero divisors $f\circ g=0$ in $\Z[\Sigma_n]$, together with a subset $U\subseteq V_n$ such that $\Phi(f\cdot N(g\cdot v))$ is constant for all $v\in U$. 

A formal number trick $(f,g,U)$ is called a \emph{spectator friendly number trick}, or simply a \emph{number trick}, if $\Phi(g\cdot U)\subseteq \N_n$ and $\Phi(f\cdot N(g\cdot U))\neq0$.
\end{definition}

As an example, let $\tau=(13)\in\Z[\Sigma_3]$ and consider the triple $(1+\tau,1-\tau,U)$. If $U=\{a>c\}$ we obtain the usual $1089$-trick, which is spectator friendly in the sense of \Cref{def:trick}. If on the other hand $U$ is $\{a=b\}$ or $\{a<c\}$, we obtain a formal number trick which is not spectator friendly. See \Cref{ex:neg} for more details on how the $1089$-trick fits in this formalism.

Similarly, by switching the roles of $1+\tau$ and $1-\tau$, we obtain a formal number trick $(1-\tau,1+\tau,U)$, which is spectator friendly with output $99$ on the cell $U=\{a+c\le8,b\ge5\}$. See \Cref{ex:reverse-1089} for details on this ``reversed 1089-trick".

\begin{example}\label{ex:null-rels}
We can find several examples of formal number tricks by looking for zero divisors in $\Z[\Sigma_n]$. Below we list some different classes of examples; we invite the reader to find their own examples by using other zero divisors in $\Z[\Sigma_n]$. 
\begin{itemize}
\item[(a)] The classical 1089-trick uses the transposition $\tau=(13)$. Letting instead $\tau=(12)$, the zero divisors $(1+\tau)\circ (1-\tau)=0$ result in a spectator friendly number trick which outputs $990$ on the cell $\{a>b\}\subseteq\N_3$. Using instead $\tau=(23)$, the result is $99$ on the cell $\{b>c\}$. 

\item[(b)] Let $\rho=(123)$ denote the rotation in $\Sigma_3$, so that $\rho^3=1$. Then $(1+\rho+\rho^2)\circ (1-\rho)=0$, and this relation gives rise to the rotation trick we saw in the Introduction. One possible constraint is $a\ge b>c$, and this will give a spectator friendly  number trick whose output is 1998. This trick is spectator friendly on other cells as well; see \Cref{table} for more details.

\item[(c)] More generally, we can pick the rotation 
$\rho=(123\dots n)\in\Sigma_n$ together with the relation 
$
\p*{\sum_{i=0}^{n-1}\rho^i}\circ(1-\rho)=0,
$
which will define a formal number trick on $n$-digit numbers. One can show for instance that the output will always be a multiple of the $n$-th repunit $(10^n-1)/(10-1)=\underbrace{111\cdots 1}_{n}$.

\item[(d)] Behrends' \cite{Behrends} and Webster's \cite{1089} generalized 1089-trick on $n$-digit numbers is obtained from $(1+\sigma)\circ (1-\sigma)=0$, where $\sigma\in\Sigma_n$ reverses the digits. The papers \cite{Behrends,1089} contain results on the number of cells for these number tricks.
\item[(e)] For $H$ a nontrivial subgroup of $\Sigma_n$, let $N_H\defeq\sum_{h\in H}h\in\Z[\Sigma_n]$. If $h\in H$ then $h\circ N_H=N_H$, and so $N_H\circ(h-1)=0$. Hence the elements $N_H$ and $h-1$ of $\Z[\Sigma_n]$ define a formal number trick for any $h\in H\setminus\{1\}$.
\item[(f)] Generalizing the previous example, consider a nontrivial subgroup $H$ of $\Sigma_n$ together with a character $\chi\colon H\to\{\pm1\}$. Let 
$
N_{H,\chi}\defeq\sum_{h\in H}\chi(h)h\in\Z[\Sigma_n].
$
Then $N_{H,\chi}\circ (h-\chi(h))=0$ for any $h\in H$, and this defines a formal number trick if $h\neq1$.

As an example, take the Klein four-subgroup $H=\{1,x,y,z=xy\}\subseteq \Z[\Sigma_4]$, where $x=(12)(34)$, $y=(13)(24)$ and $z=(14)(23)$, along with the character $\chi(x)=\chi(y)=-1$, $\chi(z)=1$. For each $x$, $y$ and $z\in H$ we thus obtain a formal number trick. For instance, $N_{H,\chi}\circ(z-1)=(1-x-y+z)\circ(z-1)=0$, and on for instance the cell $\{a<d,b<c\}$ we obtain a spectator friendly number trick with output $1782$.
\end{itemize}
\end{example}

\section{Main result}\label{sect:mainthm}
In this section we prove our claims. We start by defining an action of $\Z[\Sigma_n]$ on length-$n$ digit vectors.

\subsubsection*{Symmetric group action.} Let
$
V_n=\{0,1,\dots,9\}^n\subseteq\Z^n
$
be the set of length-$n$ vectors $v=(v_1,\dots,v_n)$ where $v_i\in\{0,1,\dots,9\}$. The symmetric group $\Sigma_n$ acts on $V_n$ and $\Z^n$ by
\[
\sigma\cdot v=(v_{\sigma^{-1}(1)},\dots,v_{\sigma^{-1}(n)})
\]
for $v=(v_1,\dots,v_n)\in V_n$ or $\Z^n$, and $\sigma\in \Sigma_n$. In other words, we permute the coordinates of $v$ according to the permutation $\sigma$. We extend this action $\mathbb Z$-linearly to an action of the group ring $\mathbb Z[\Sigma_n]$ on $\mathbb Z^n$; thus for
$
g=\sum_{\sigma\in \Sigma_n} a_\sigma\,\sigma\in\mathbb Z[\Sigma_n]
$
and $v\in\mathbb Z^n$ we write $g\cdot v=\sum_\sigma a_\sigma(\sigma\cdot v)\in\mathbb Z^n$.

\subsubsection*{Keeping track of the carrying.}We now define suitable maps between $V_n$, $\Z^n$ and $\Z$ that allow us to keep track of the carrying and thus to formalize number tricks. First, define the evaluation homomorphism
\[
\Phi:\mathbb Z^n\to\mathbb Z,\qquad \Phi(x_1,\dots,x_n)=\sum_{i=1}^n x_i10^{\,n-i}.
\]
We now aim to define a ``normalization map" 
$
N\colon \Z^n\to V_n
$
which encodes the carrying operation. 

\begin{definition}\label{def:N}
For $u=(u_1,\dots,u_n)\in \Z^n$, define the \emph{normalized vector}
\[
N(u)\defeq(d_1,\dots, d_n)\in V_n=\{0,1,\dots,9\}^n,
\]
where the coordinates $d_i$ of $N(u)$ are defined recursively as follows. For $i=n,n-1,\dots,1$, set:
\begin{align*}
&t_n\defeq u_n,\quad c_n\defeq\left\lfloor\dfrac{u_n}{10}\right\rfloor, \quad d_n\defeq t_n-10c_n\\[10pt]
&t_i\defeq u_i+c_{i+1},\quad c_i\defeq\left\lfloor\dfrac{t_i}{10}\right\rfloor,\quad d_i\defeq t_i-10c_i.
\end{align*}
In other words, the $d_i$'s are obtained by the usual carrying procedure from right to left.

Finally, for $u\in\mathbb Z^n$ define the \emph{carry vector}
\[
\bfc(u)\defeq N(u)-u\in\mathbb Z^n.
\]
\end{definition}

\begin{remark}We note the following:
\begin{itemize}
\item The map $\Phi\colon\Z^n\to \Z$ is $\Z$-linear, while the map $N\colon\Z^n\to V_n$ not. Note also that the restriction $\Phi|_{V_n}\colon V_n\to\{0,1,\dots,10^n-1\}$ is bijective. See \Cref{lem:normalization} below for more details on the relationship between the maps $\Phi$ and $N$.
\item There are two ways to record the carrying involved in the normalization process. One is via the carry vector $\bfc(u)$ defined above as $N(u)-u$. Another way is to collect the $c_i$'s occurring in the normalization algorithm of \Cref{def:N} into a vector $(c_1,\dots,c_n)$. These two vectors are in general different: for instance, we will see in \Cref{ex:neg} that for the $1089$-trick, the carry vector $\bfc(g\cdot v)$ is $(-1,9,10)$, while the vector $(c_1,c_2,c_3)$ is $(0,-1,-1)$. In the proof of \Cref{thm:main} it will however be most convenient to work with $\bfc(u)=N(u)-u$. We will therefore refer to $\bfc(u)$ as the carry vector, and refer to the $c_i$'s as the \emph{algorithmic carries} produced by the normalization algorithm.
\item A formal number trick $(f,g,U)$ with $\Phi(f\cdot N(g\cdot U))\ne0$ is spectator friendly if and only if the final algorithmic carry $c_1$ is zero.
\end{itemize}
\end{remark}

\begin{example}\label{ex:neg}
Let us see how the classical 1089-trick fits in the formalism of \Cref{def:N}. Let $v=(a,b,c)\in V_3=\{0,1,\dots,9\}^3$ with $a>c$, $\tau=(13)\in\Sigma_3$, $f=1+\tau$, and $g=1-\tau$. Then, with the notation above, the 1089-number trick means the computation of the number
$
\Phi(f\cdot N(g\cdot v)).
$
Thus we must first compute the normalization $N(g\cdot v)=(d_1,d_2,d_3)$ of $g\cdot v=(a-c,0,c-a)$, which simply means writing down the result after carrying. Indeed, we first find that $c_3=\floor{(c-a)/10}=-1$ (since $0\le c<a\le 9$) and hence $d_3=(c-a)-10c_3=10+c-a$. Similarly we find $d_2=9$ and $d_1=a-c-1$, so $N(g\cdot v)=(a-c-1,9,10+c-a)$. Hence
\begin{align*}
\Phi(f\cdot N(g\cdot v))&=\Phi\big((a-c-1,9,10+c-a)+(10+c-a,9,a-c-1)\big)\\
&=\Phi(9,18,9)=1089.
\end{align*}
We note also that in this example, the carry vector $\bfc(g\cdot v)$ of $g\cdot v$ is 
\[
\bfc(g\cdot v)=N(g\cdot v)-g\cdot v=(-1,9,10),
\]
while the algorithmic carries are given by
\[
(c_1,c_2,c_3)=(0,-1,-1).
\]
The final algorithmic carry $c_1$ being zero is one way of saying that the trick $(1+\tau,1-\tau,\{a>c\})$ is spectator friendly.

We can similarly find the output when $a<c$: indeed, we run the same algorithm as above and find $d_3=c-a$, $d_2=0$, $d_1=10+a-c$, and finally $\Phi(f\cdot N(g\cdot v))=\Phi(10,0,10)=1010$.
In this case, the carry vector is $\bfc(g\cdot v)=(10,0,0)$ and the algorithmic carries are given by $(c_1,c_2,c_3)=(-1,0,0)$. Since $c_1$ is here nonzero, the formal number trick $(1+\tau,1-\tau,\{a<c\})$ is not spectator friendly. 
\end{example}

\begin{example}\label{ex:reverse-1089}
Let $\tau=(13)$, so that the 1089-trick is given by $f=1+\tau$ and $g=1-\tau$. If we instead let $f=1-\tau$ and $g=1+\tau$, we obtain a formal number trick which is spectator friendly only on the cell $\{a+c\le8,b\ge5\}$. Indeed, for $v=(a,b,c)\in V_3$, we have
\[
g\cdot v =(a,b,c)+(c,b,a)= (a+c,\,2b,\,a+c).
\]
We need at least one carry for the result to be nonzero after applying $f=1-\tau$, and this carry has to come from the middle term $2b$ (since if $a+c\ge10$ then $\Phi(a+c,2b,a+c)$ lands outside $\N_3$). This gives the constraints $a+c\le8$, $2b\ge10$. Under these constraints we find
$N(g\cdot v)=(a+c+1,2b-10,a+c)$ and $\Phi(f\cdot N(g\cdot v))=\Phi(1,0,-1)=99$.
\end{example}

\begin{lemma}\label{lem:normalization}
The following diagram commutes in the category of sets: 
\[\begin{tikzcd}
\Z^n\ar{r}{N}\ar{d}[swap]{\Phi} & V_n\ar{d}{\Phi|_{V_n}}\\
\Z\arrow{r}[swap]{\bmod{10^n}} & \Z/10^n
\end{tikzcd}\]
In other words, for any $u=(u_1,\dots,u_n)\in \mathbb Z^n$ we have
$
N(u) \equiv (\Phi|_{V_n})^{-1}\circ\Phi(u) \bmod{10^n}.
$
In particular, $\Phi|_{V_n}(N(u))$ is the unique representative of the congruence class
$\Phi(u) \bmod 10^n$ in the set $\{0,1,\dots,10^n-1\}$.
\end{lemma}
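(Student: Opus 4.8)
The plan is to reduce the whole statement to the single congruence $\Phi(N(u))\equiv\Phi(u)\pmod{10^n}$, which is exactly the assertion that the square commutes, and then to establish this congruence by a direct telescoping computation using the recursion in \Cref{def:N}. Everything else follows formally from the bijectivity of $\Phi|_{V_n}$ recorded just before the lemma.

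First I would unwind the recursion. Writing $c_{n+1}\defeq 0$ for uniformity, the definitions $t_i=u_i+c_{i+1}$ and $d_i=t_i-10c_i$ combine into the single formula
\[
d_i=u_i+c_{i+1}-10c_i\qquad(1\le i\le n).
\]
Along the way I would note that this also confirms $N$ is well-defined as a map into $V_n$: since $c_i=\floor{t_i/10}$, each $d_i=t_i-10\floor{t_i/10}$ is the residue of $t_i$ modulo $10$ and hence lies in $\{0,\dots,9\}$. Next I would substitute the formula for $d_i$ into $\Phi(N(u))=\sum_{i=1}^n d_i 10^{\,n-i}$ and split the result into three sums. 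The first reproduces $\Phi(u)=\sum_i u_i 10^{\,n-i}$, while the other two, $\sum_{i=1}^n c_{i+1}10^{\,n-i}$ and $\sum_{i=1}^n c_i 10^{\,n-i+1}$, telescope against one another after the reindexing $j=i+1$: all interior terms cancel, the bottom term $c_{n+1}10^0$ vanishes by our convention $c_{n+1}=0$, and only the top boundary term $-c_1 10^{\,n}$ survives. This yields the clean identity
\[
\Phi(N(u))=\Phi(u)-c_1 10^{\,n},
\]
and in particular $\Phi(N(u))\equiv\Phi(u)\pmod{10^n}$, which is the commutativity of the diagram.

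Finally I would read off the ``in particular'' clause. Since $N(u)\in V_n$, the bijection $\Phi|_{V_n}\colon V_n\to\{0,1,\dots,10^n-1\}$ sends it to $\Phi|_{V_n}(N(u))=\Phi(N(u))\in\{0,\dots,10^n-1\}$, so by the displayed congruence this value is precisely the unique representative of $\Phi(u)\bmod 10^n$ in that range; applying $(\Phi|_{V_n})^{-1}$ then gives the stated description of $N(u)$. I do not expect a genuine obstacle here, as the argument is purely computational; the only point requiring care is the index bookkeeping in the telescoping sum, and in particular the role of the convention $c_{n+1}=0$, which is what guarantees that it is exactly the top carry $c_1$ (times $10^n$) that gets discarded modulo $10^n$.
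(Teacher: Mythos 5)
Your proposal is correct and follows essentially the same route as the paper's proof: both unwind the recursion to the single formula $d_i=u_i+c_{i+1}-10c_i$ with the convention $c_{n+1}=0$, telescope the resulting sum to obtain $\Phi(N(u))-\Phi(u)=-c_1\cdot 10^n$, and then deduce uniqueness of the representative from $N(u)\in\{0,1,\dots,9\}^n$. The only (harmless) addition on your part is the explicit remark that $d_i\in\{0,\dots,9\}$ confirms $N$ is well-defined.
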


\begin{proof}
By definition,
the normalization $N(u)=(d_1,\dots,d_n)$ is obtained by
$
d_i= u_i + c_{i+1} - 10 c_i
$
with $c_{n+1}=0$. Thus
$
d_i - u_i=c_{i+1} - 10 c_i,
$
so that
\[
N(u)-u = (d_1-u_1,\dots,d_n-u_n) = (c_2-10c_1, c_3-10c_2,\dots,-10c_n).
\]
We now apply $\Phi$ to obtain
\[
\Phi(N(u))-\Phi(u) 
= \sum_{i=1}^n (c_{i+1}-10c_i)\cdot 10^{n-i}
= \sum_{i=1}^nc_{i+1}\cdot10^{n-i}-\sum_{i=1}^nc_i\cdot10^{n-i+1}
\]
Here all terms except $-c_1\cdot10^n$ cancel, yielding
$
\Phi(N(u))-\Phi(u) = -c_1\cdot 10^n.
$
In other words,
\[
\Phi(N(u))\equiv \Phi(u)\bmod{10^n}.
\]
Finally, since $N(u)\in\{0,1,\dots,9\}^n$, it follows that $\Phi(N(u))$ lies between $0$ and $10^n-1$. This proves uniqueness of the representative.
\end{proof}

We are now ready to state our main result:

\begin{theorem}\label{thm:main}
Let $g\in\mathbb Z[\Sigma_n]$. For any $v\in\Z^n$, write $g\cdot v=(u_1(v),\dots,u_n(v))$. Let also $c_1,\dots,c_n$ denote the algorithmic carries that occur the normalization algorithm for $g\cdot v$, so that $\bfc(g\cdot v)=(e_1,\dots,e_n)$ where $e_i=c_{i+1}-10c_i$.
\begin{enumerate}
\item (Partition into carry cells)
There is a finite subset $\mathcal C\subseteq\mathbb Z^n$ such that $V_n$ can be written as a disjoint union
\[
V_n=\bigsqcup_{\bfe\in\mathcal C} U_{\bfe},\quad
U_{\bfe}\defeq\{v\in V_n:\bfc(g\cdot v)=\bfe\}\quad\text{where }\bfe=(e_1,\dots,e_n)\in\calC,
\]
where each cell $U_{\bfe}$ is the set of solutions in $V_n$ to the system of linear inequalities
\begin{align*}
10c_n \le u_n(v)&\le 10c_n+9\\[10pt]
10 c_i \le u_i(v)+c_{i+1}&\le 10c_i+9\quad(i=n-1,\dots,1)
\end{align*}
for integers $c_1,\dots,c_n$ (necessarily the algorithmic carries for $g\cdot v$)
satisfying $e_i=c_{i+1}-10c_i$.

\item (Constancy on cells)
On every nonempty cell $U_{\bfe}$ of the partition, the carry vector $\bfc(g\cdot v)$ is constant, equal to $\bfe$, and therefore
\[
N(g\cdot v) \;=\; g\cdot v + \bfe
\qquad\text{for all } v\in U_{\bfe}.
\]

\item (Formal number trick)
If $f\in\mathbb Z[\Sigma_n]$ satisfies $f\circ g=0$ in $\mathbb Z[\Sigma_n]$, then for every $v\in V_n$,
\[
\Phi\big(f \cdot N(g\cdot v)\big)
=\Phi\big(f\cdot \bfc(g\cdot v)\big).
\]
In other words, the output is constant on each cell $U_{\bfe}$ and equals
$
\Phi(f\cdot \bfe)
$ there.
\end{enumerate}
\end{theorem}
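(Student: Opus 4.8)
The plan is to handle the three parts in order, concentrating the actual work in part (1), where I would carefully analyze the carrying algorithm; parts (2) and (3) then follow formally. I would open with the linear-algebraic reformulation of the action. Writing $g=\sum_\sigma a_\sigma\sigma$, the coordinate $u_i(v)=(g\cdot v)_i=\sum_\sigma a_\sigma v_{\sigma^{-1}(i)}$ is a fixed $\Z$-linear function of $v$, so $u_i(v)=\sum_j M_{ij}v_j$ for the integer matrix $M\in\Mat_{n\times n}(\Z)$ representing $g$. I would also verify at the outset that the action is a genuine left module action, i.e. $f\cdot(g\cdot v)=(f\circ g)\cdot v$: this is the computation $\tau\cdot(\sigma\cdot v)=(\tau\sigma)\cdot v$ (using $\sigma^{-1}(\tau^{-1}(i))=(\tau\sigma)^{-1}(i)$) extended $\Z$-bilinearly, and it is the single identity on which the punchline (3) rests.

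For part (1), the key point is that the normalization algorithm of \Cref{def:N} assigns to each $v\in V_n$ a \emph{unique} carry vector $(c_1,\dots,c_n)$, since each $c_i=\lfloor t_i/10\rfloor$ is pinned down recursively (from $i=n$ down to $i=1$, with $c_{n+1}=0$) by the values $u_i(v)$. The floor condition $d_i=t_i-10c_i\in\{0,\dots,9\}$ is exactly the pair of inequalities $10c_i\le u_i(v)+c_{i+1}\le 10c_i+9$ recorded in the theorem, so each cell $U_\bfc$ is precisely the solution set of that linear system; conversely, any integers $c_1,\dots,c_n$ satisfying the inequalities must coincide with the algorithm's carries, by the same downward-recursive uniqueness. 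Since $v$ ranges over the finite set $V_n$, each $u_i(v)$ lies in a bounded interval, whence each $c_i$ is bounded and only finitely many carry vectors occur; this produces the finite index set $\calC$ and exhibits $V_n$ as the disjoint union of the $U_\bfc$. Part (2) is then immediate: by construction $\bfc(g\cdot v)=\bfc$ is constant on $U_\bfc$, and unwinding the definition $\bfc(u)=N(u)-u$ yields $N(g\cdot v)=g\cdot v+\bfc$ there.

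Part (3) is the short conceptual payoff. On any cell, I would substitute the identity from part (2) and use $\Z$-linearity of the action of $f$ to write
\begin{align*}
f\cdot N(g\cdot v)&=f\cdot\big(g\cdot v+\bfc(g\cdot v)\big)\\
&=(f\circ g)\cdot v+f\cdot\bfc(g\cdot v).
\end{align*}
The hypothesis $f\circ g=0$ annihilates the first summand, so $f\cdot N(g\cdot v)=f\cdot\bfc(g\cdot v)$, and applying the $\Z$-linear map $\Phi$ gives the claimed equality $\Phi\big(f\cdot N(g\cdot v)\big)=\Phi\big(f\cdot\bfc(g\cdot v)\big)$; by part (2) this is the constant $\Phi(f\cdot\bfc)$ on $U_\bfc$. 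The only genuine obstacle I anticipate is bookkeeping rather than conceptual: being careful that the carries are uniquely determined by the inequalities in both directions, and that the module-action identity $f\cdot(g\cdot v)=(f\circ g)\cdot v$ holds with the chosen convention $\sigma\cdot v=(v_{\sigma^{-1}(1)},\dots,v_{\sigma^{-1}(n)})$, since it is precisely the vanishing of $(f\circ g)\cdot v$ that makes the digits of the input cancel and leaves only the carry contribution.
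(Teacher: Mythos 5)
Your proposal is correct and follows essentially the same route as the paper: the cells are cut out by the floor-condition inequalities with the carries uniquely determined by the downward recursion, finiteness comes from boundedness of the linear forms $u_i(v)$ on $V_n$, and part (3) is the linearity computation $f\cdot N(g\cdot v)=(f\circ g)\cdot v+f\cdot\bfc(g\cdot v)$ combined with $f\circ g=0$ and $\Z$-linearity of $\Phi$. The only difference is that you explicitly verify the module identity $f\cdot(g\cdot v)=(f\circ g)\cdot v$ for the convention $\sigma\cdot v=(v_{\sigma^{-1}(1)},\dots,v_{\sigma^{-1}(n)})$, which the paper takes for granted; this is a harmless (indeed welcome) addition rather than a different approach.
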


\begin{proof}
Each coordinate $u_i(v)$ of $g\cdot v$ is an integer linear form in the digits of $v$, so as $v$ ranges over $V_n$ it takes finitely many values. If $\bfe=(e_1,\dots,e_n)\in\mathbb Z^n$, then $\bfc(g\cdot v)=\bfe$ if and only if there exist integers $c_1,\dots,c_n$ such that $e_n=-10c_n$ and $e_i=c_{i+1}-10c_i$ for $i=n-1,\dots,1$, and which furthermore satisfies
\[
10c_n \le u_n(v)\le 10c_n+9,\quad
10c_i \le u_i(v)+c_{i+1}\le 10c_i+9\quad (i=n-1,\dots,1).
\] 
Thus $U_{\bfe}$ is (the integer points of) a polytope intersected with the box $V_n$; only finitely many $\bfe$ can occur, yielding a finite partition of $V_n$. On $U_{\bfe}$ the algorithmic carries are by definition constant, hence $N(g\cdot v)=g\cdot v+\bfe$ there. If $f\circ g=0$, then by linearity of $\Phi$ we have
\[
0=\Phi\big(f\cdot(g\cdot v)\big)
=\Phi\big(f\cdot(N(g\cdot v)-\bfc(g\cdot v))\big)
\]
for any $v\in\Z^n$, yielding $\Phi(f\cdot N(g\cdot v))=\Phi(f\cdot \bfc(g\cdot v))$ which is constant on each $U_{\bfe}$.
\end{proof}

\begin{remark}\begin{itemize}
\item \Cref{thm:main} shows that the number of cells in the partition $V_n=\bigsqcup_\bfe U_\bfe$ is determined solely by the choice of $g$.
The role of $f$ (in a null relation $f\circ g=0$) is to determine which constant
value is assigned to each cell.

\item Dropping the assumption that $f$ and $g$ are zero divisors may lead to pathological partitions of $V_n$ into singleton cells. For instance, if $k\ge1$ and $g=10^k\id\in\Z[\Sigma_n]$, then the carry map $v\mapsto\bfc\big(g\cdot v)=N(g\cdot v)-g\cdot v\colon V_n \to \mathbb Z^n$ is injective, implying that each $U_{\bfe}$ in the corresponding partition is a singleton set.
\end{itemize}
\end{remark}

\section{Examples}\label{table}
With the technical setup of the previous section at hand, we now turn back to examples.

\subsection*{Transposition trick with $\tau=(12)$: $f=1+\tau$, $g=1-\tau$}
Here $g\cdot(a,b,c)=(a-b,b-a,0)$, and the partition of $\N_3$ is given by the sign of $a-b$. We record the cells, output, and the algorithmic carries below. We also mark if the given triple $(f,g,U)$ is spectator friendly.

\[
\begin{array}{c|c|c|c|c}
\text{\vtop{\hbox{\strut Cell condition on}\hbox{\strut \qquad $(a,b,c)$}}} & \bfe & \text{\vtop{\hbox{\strut Algorithmic carries }\hbox{\strut \qquad $(c_1,c_2,c_3)$}}} & \Phi\big(f\cdot N(g\cdot v)\big) 
 & \text{\vtop{\hbox{\strut Spectator}\hbox{\strut friendly?}}}\\
\midrule
a>b & (-1,\,10,\,0) & (0,\,-1,\,0) & 990   &  \checkmark\\
a=b & (0,\,0,\,0)   & (0,\,0,\,0)  & 0     &  \\
a<b & (10,\,0,\,0)  & (-1,\,0,\,0)  & 1100  & 
\end{array}
\]

\subsection*{Transposition trick with $\tau=(23)$: $f=1+\tau$, $g=1-\tau$}
Here $g\cdot(a,b,c)=(0,\ b-c,\ c-b)$, and the partition is by the sign of $b-c$.

\[
\begin{array}{c|c|c|c|c}
\text{\vtop{\hbox{\strut Cell condition on}\hbox{\strut \qquad $(a,b,c)$}}}  & \bfe & \text{\vtop{\hbox{\strut Algorithmic carries }\hbox{\strut \qquad $(c_1,c_2,c_3)$}}} & \Phi\big(f\cdot N(g\cdot v)\big) 
 & \text{\vtop{\hbox{\strut Spectator}\hbox{\strut friendly?}}}\\
\midrule
b>c  & (0,\,-1,\,10) & (0,\,0,\,-1) & 99    & \checkmark \\
b=c  & (0,\,0,\,0)   & (0,\,0,\,0)  & 0     &  \\
b<c  & (9,\,10,\,0)  & (-1,\,-1,\,0)  & 1910  & 
\end{array}
\]

\subsection*{Rotation trick with $\rho=(123)$: $f=1+\rho+\rho^2$, $g=1-\rho$}
Here $g\cdot(a,b,c)=(a-c,\ b-a,\ c-b)$.

\[
\begin{array}{c|c|c|c|c}
\text{\vtop{\hbox{\strut Cell condition on}\hbox{\strut \qquad $(a,b,c)$}}} & \bfe & \text{\vtop{\hbox{\strut Algorithmic carries }\hbox{\strut \qquad $(c_1,c_2,c_3)$}}} & \Phi\big(f\cdot N(g\cdot v)\big) 
 & \text{\vtop{\hbox{\strut Spectator}\hbox{\strut friendly?}}}\\
\midrule
a=b=c                          & (0,\,0,\,0)   & (0,\,0,\,0)  & 0    &  \\
c\le a< b                      & (0,\,-1,\,10) & (0,\,0,\,-1)  & 999  & \checkmark \\
b\le c< a                      & (-1,\,10,\,0) & (0,\,-1,\,0) & 999  &  \checkmark\\
c< b\le a                      & (-1,\,9,\,10) & (0,\,-1,\,-1) & 1998 &  \checkmark\\
a\le b\le c\text{ and }a<c     & (10,\,0,\,0)  & (-1,\,0,\,0)  & 1110 &  \\
a< c< b                        & (10,\,-1,\,10)& (-1,\,0,\,-1) & 2109 &  \\
b< a\le c                      & (9,\,10,\,0)  & (-1,\,-1,\,0)  & 2109 & 
\end{array}
\]

\begin{remark}\label{remark:null}
One may readily extend \Cref{thm:main} to null relations $\sum_if_i\circ g_i=0$ in $\Z[\Sigma_n]$ where not necessarily each $f_i\circ g_i$ equals $0$. The associated formal number trick is computed as $\sum_i\Phi(f_i\cdot N(g_i\cdot v))=\sum_i\Phi(f_i\cdot \bfc(g_i\cdot v))$. In this way, any null relation in $\Z[\Sigma_n]$ defines a formal number trick.
\end{remark}

\begin{example}
We round off with a didactic example illustrating that in principle, school students can discover their own number tricks by playing with relations between symmetries. 

Suppose the students are learning about symmetries, and are asked to find the symmetries of an equilateral triangle. They find that the nontrivial symmetries consist (using the notation of \Cref{figure}) of the rotations $\rho=(123)$ and $\rho^2$, along with the reflections $\mu_1=(23)$, $\mu_2=(13)$ and $\mu_3=(12)$. 
The students learn further that we can follow up one symmetry by another, and that this yields another symmetry. For instance, the reflection $\mu_3$ followed by the rotation $\rho$ results in the reflection $\mu_2$. The ambitious student may even derive the entire multiplication table for $\Sigma_3$.
\begin{figure}
\begin{tikzpicture}[>=stealth, every node/.style={font=\small}]
\def\s{3}
\coordinate (A) at (0,0);
\coordinate (B) at (\s,0);
\coordinate (C) at ({\s/2},{0.8660254*\s}); 

\coordinate (Mab) at ($(A)!0.5!(B)$);
\coordinate (Mbc) at ($(B)!0.5!(C)$);
\coordinate (Mca) at ($(C)!0.5!(A)$);

\coordinate (O)   at ($(C)!2/3!(Mab)$);

\draw[thick] (A)--(B)--(C)--cycle;
\node[anchor=north east] at ($(A)+(-2pt,-2pt)$) {$a$};
\node[anchor=north west] at ($(B)+(2pt,-2pt)$) {$b$};
\node[anchor=south]      at ($(C)+(0,2pt)$)     {$c$};

\pgfmathsetmacro{\r}{0.577350269*\s} 
\def\delta{5}

\draw[->, thick] (O) ++(-150+\delta:\r) arc (-150+\delta:-30-\delta:\r);

\draw[->, thick] (O) ++(-30+\delta:\r)  arc (-30+\delta:90-\delta:\r);

\draw[->, thick] (O) ++(90+\delta:\r)   arc (90+\delta:210-\delta:\r);

\node at ($(O)+(-90:0.8*\r)$) {$\rho$};

\draw[dashed] (A)--(Mbc);
\draw[dashed] (B)--(Mca);
\draw[dashed] (C)--(Mab);

\node at (0.7,0.65) {$\mu_1$};
\node at (2.3,0.65) {$\mu_2$};
\node at (1.27,1.6) {$\mu_3$};

\begin{scope}[xshift=8.1cm, yshift=0.2cm]
  \matrix (T) [matrix of nodes,
               nodes in empty cells,
               nodes={minimum width=8mm, minimum height=6mm, inner sep=1pt, anchor=center},
               row sep=-\pgflinewidth, column sep=-\pgflinewidth,
               row 1/.style={nodes={fill=gray!15}},
               column 1/.style={nodes={fill=gray!15}}] {
      {}   & $1$ & $\rho$ & $\rho^2$ & $\mu_1$ & $\mu_2$ & $\mu_3$ \\
      $1$   & $1$   & $\rho$   & $\rho^2$ & $\mu_1$ & $\mu_2$ & $\mu_3$ \\
      $\rho$& $\rho$& $\rho^2$ & $1$      & $\mu_3$ & $\mu_1$ & $\mu_2$ \\
      $\rho^2$ & $\rho^2$ & $1$ & $\rho$   & $\mu_2$ & $\mu_3$ & $\mu_1$ \\
      $\mu_1$ & $\mu_1$ & $\mu_2$ & $\mu_3$ & $1$    & $\rho$  & $\rho^2$ \\
      $\mu_2$ & $\mu_2$ & $\mu_3$ & $\mu_1$ & $\rho^2$ & $1$    & $\rho$ \\
      $\mu_3$ & $\mu_3$ & $\mu_1$ & $\mu_2$ & $\rho$ & $\rho^2$ & $1$ \\
  };
\end{scope}
\end{tikzpicture}
\caption{The symmetries of a triangle can be used to generate number tricks.}
\label{figure}
\end{figure}

This is the point where number tricks enter the picture: it is precisely these kinds of relations between symmetries that give rise to number tricks. Indeed, knowing for instance that $\rho\circ\mu_3=\mu_2$, we can use this equality to derive a null relation as in \Cref{remark:null} and hence a (formal) number trick.
How to look for such a null relation? One possible recipe is:
\begin{itemize}
\item Start with a relation between symmetries that the students have discovered, say $\rho\circ\mu_3=\mu_2$.
\item Use this relation to create a two term null relation $f_1\circ g_1+f_2\circ g_2=0$ in $\Z[\Sigma_3]$, which will define a (formal) number trick as in \Cref{remark:null}. 
\end{itemize}
Keeping the spirit of the $1089$-trick, let us have the trick starting with the term $1-\mu_2$.
In the second term we can then make use of the equality $\rho\circ\mu_3=\mu_2$ to kill off $\mu_2$ from the first term. Then we only lack something to kill off the $1$, which could for instance be done by using that $\rho^3=1$. Now we have suitable candidates for our second term: we let $f_2=\rho$ and $g_2=\mu_3-\rho^2$. Our formal number trick is then given by the null relation
\[
(1-\mu_2)+\rho\circ(\mu_3-\rho^2)=0.
\]
On the cell $\{a>c\}$ this is a spectator friendly number trick with output $999$. Performing this number trick goes as follows:
\begin{itemize}
\item[Step 1:] Choose a three-digit number $abc$ (with $a>c$), say 321, and subtract its reverse: $321-123=198$. 
\item[Step 2:] Swap the first and second digits of the chosen number, and subtract the doubly rotated number: $231-213=018$. Finally, rotate this result and add it to the result from the first step: $801+198=999$. 
\end{itemize}

Ambitious students may venture further into other dihedral groups. We can for instance consider symmetries of the square, which we can use to produce formal number tricks on four-digit numbers. As an example, take $\tau=(14)$, $\mu=(23)$ and $\sigma=\tau\circ\mu\in D_4$. Then the null relation
\[
(1-\sigma)+\tau\circ(\mu-\tau)=0\in\Z[\Sigma_4]
\]
gives a formal number trick. It is spectator friendly on for instance the cell $\{a>d,b>c\}$, on which the output is $10989$.

In this way, the students may compute relations between symmetries and use them to create their own number tricks, secretly learning group theory in the process.
\end{example}

\printbibliography
\end{document}